%% This is file `elsarticle-template-1-num.tex',
%%
%% Copyright 2009 Elsevier Ltd
%%
%% This file is part of the 'Elsarticle Bundle'.
%% ---------------------------------------------
%%
%% It may be distributed under the conditions of the LaTeX Project Public
%% License, either version 1.2 of this license or (at your optiono) any
%% later version.  The latest version of this license is in
%%    http://www.latex-project.org/lppl.txt
%% and version 1.2 or later is part of all distributions of LaTeX
%% version 1999/12/01 or later.
%%
%% The list of all files belonging to the 'Elsarticle Bundle' is
%% given in the file `manifest.txt'.
%%
%% Template article for Elsevier's document class `elsarticle'
%% with numbered style bibliographic references
%%
%% $Id: elsarticle-template-1-num.tex 149 2009-10-08 05:01:15Z rishi $
%% $URL: http://lenova.river-valley.com/svn/elsbst/trunk/elsarticle-template-1-num.tex $
%%
\documentclass[final,1p,times,authoryear]{elsarticle}
\usepackage[english]{babel}
%% Use the option review to obtain double line spacing
%% \documentclass[preprint,review,12pt]{elsarticle}

\usepackage[bitstream-charter]{mathdesign}
\usepackage[T1]{fontenc}

%\usepackage{epsfig}
%\usepackage[]{graphicx}

%% The amssymb package provides various useful mathematical symbols
%\usepackage{amssymb}
%% The amsthm package provides extended theorem environments
\usepackage{amsthm}

%% The lineno packages adds line numbers. Start line numbering with
%% \begin{linenumbers}, end it with \end{linenumbers}. Or switch it on
%% for the whole article with \linenumbers after \end{frontmatter}.
%\usepackage{lineno}
\usepackage{color}

\usepackage{caption}
\usepackage{tabularx}

\usepackage[]{hyperref}

% THEOREM Environments 
\newtheorem{theorem}{Theorem}

\newtheorem{proposition}[theorem]{Proposition}

%\newcommand{\teb}{}
%\newcommand{\ter}{}

%% natbib.sty is loaded by default. However, natbib options can be
%% provided with \biboptions{...} command. Following options are
%% valid:

%%   round  -  round parentheses are used (default)
%%   square -  square brackets are used   [option]
%%   curly  -  curly braces are used      {option}
%%   angle  -  angle brackets are used    <option>
%%   semicolon  -  multiple citations separated by semi-colon
%%   colon  - same as semicolon, an earlier confusion
%%   comma  -  separated by comma
%%   numbers-  selects numerical citations
%%   super  -  numerical citations as superscripts
%%   sort   -  sorts multiple citations according to order in ref. list
%%   sort&compress   -  like sort, but also compresses numerical citations
%%   compress - compresses without sorting
%%
%% \biboptions{comma,round}

\biboptions{round}

\journal{JMPS}

% Additional packages
%\usepackage{subfig}
\usepackage{latexsym}
\usepackage{amsmath}
\usepackage{graphics}
\usepackage{color}
\usepackage{tikz}
\usepackage{pgfplots}
\usepackage{subcaption}

\begin{document}

\begin{frontmatter}

%% Title, authors and addresses

%% use the tnoteref command within \title for footnotes;
%% use the tnotetext command for the associated footnote;
%% use the fnref command within \author or \address for footnotes;
%% use the fntext command for the associated footnote;
%% use the corref command within \author for corresponding author footnotes;
%% use the cortext command for the associated footnote;
%% use the ead command for the email address,
%% and the form \ead[url] for the home page:
%%
%% \title{Title\tnoteref{label1}}
%% \tnotetext[label1]{}
%% \author{Name\corref{cor1}\fnref{label2}}
%% \ead{email address}
%% \ead[url]{home page}
%% \fntext[label2]{}
%% \cortext[cor1]{}
%% \address{Address\fnref{label3}}
%% \fntext[label3]{}

\title{Sharp-interface limits for brittle fracture  via the inverse-deformation formulation}

%% use optional labels to link authors explicitly to addresses:
%% \author[label1,label2]{<author name>}
%% \address[label1]{<address>}
%% \address[label2]{<address>}

%\author[label1]{Autor1}
%\author[label2]{Autor2}
\author[label1]{Timothy J.\ Healey}
\ead{tjh10@cornell.edu}
\author[label2]{Roberto Paroni}
\ead{roberto.paroni@unipi.it}
\author[label3,label4]{Phoebus Rosakis}
\ead{rosakis@uoc.gr}
 
% \cortext[cor1]{Corresponding author}
\address[label1]{Department of Mathematics, Cornell University, Ithaca, NY 14853, USA}
\address[label2]{Department of Civil and Industrial Engineering, University of Pisa, Pisa 56122, Italy}
\address[label3]{Department of Mathematics and Applied Mathematics, University of Crete, Heraklion 70013 Crete, Greece}
\address[label4]{Institute for Applied and Computational Mathematics, Foundation of Research and Technology-Hellas, Heraklion 70013, Crete, Greece}

\begin{abstract}
We derive sharp-interface models for one-dimensional brittle fracture via the inverse-deformation approach.  
Methods of $\Gamma$-convergence are employed to obtain the singular limits of previously proposed models.  The 
latter feature a local, non-convex stored energy of inverse strain, augmented by small interfacial energy, formulated in terms of the 
inverse-strain gradient.  They predict spontaneous fracture with exact crack-opening discontinuities, without the use of damage (phase) fields or pre-existing cracks; 
crack faces are endowed with a thin layer of surface energy.  The models obtained herewith inherit the same properties, except that surface energy is now concentrated at the crack faces. Accordingly, we construct energy-minimizing configurations. For a composite bar with a breakable layer, our results predict a pattern of equally spaced cracks whose number is given as an increasing function of applied load.
\end{abstract}

%\begin{keyword}
%Science \sep Publication \sep Complicated
%% keywords here, in the form: keyword \sep keyword

%% MSC codes here, in the form: \MSC code \sep code
%% or \MSC[2008] code \sep code (2000 is the default)

%\end{keyword}

\end{frontmatter}

% _____________________________________________________________________________

\tableofcontents
\clearpage

% _______________________________________________________________

\section{Introduction}

The modelling of  nucleation and growth of fracture in solids is notoriously difficult, due to the fact that cracks are typically represented by discontinuities of the deformation mapping, corresponding to infinite strains.  On the other hand, the inverse of the deformation mapping can be extended as a continuous, piecewise-smooth mapping in such situations. This simple observation is the basis for our recent work on brittle fracture in 
one-dimensional filaments, cf. \cite{RHA}.  The model features a local, non-convex stored energy function of inverse  stretch, augmented by small higher-gradient 
energy, with the latter formulated in terms of the inverse-strain gradient.  It predicts spontaneous fracture with  discontinuous deformations, as a global bifurcation from the homogeneously deformed state.  Moreover, neither damage (phase) fields nor pre-existing cracks are employed. 

The inverse formulation of classical nonlinear elasticity is due to \cite{S}, and the general strain-gradient version was obtained 
by \cite{CS}. The idea of fracture as a two-well phase transition is due to \cite{T}, where the second well is located at a strain 
going to infinity.  However, as implied above, this causes great difficulties---both analytical and numerical; the inclusion 
of higher-gradient energy does not regularize the problem, but instead suppresses fracture altogether, cf.  \cite{RHA}.  In contrast, starting  with a Lennard-Jones type stored energy in direct variables, Fig.~\ref{fig1a}, the inverse-deformation formulation naturally replaces the potential well at 
infinity with one at zero inverse-stretch; the problem now indeed has the same appearance as a two-well phase-transition model with inverse stored energy energy as in { Fig.~\ref{fig1b}.  
Moreover, the {higher-gradient  energy in the inverse formulation gives rise to surface energy in a small neighborhood of crack faces, cf. \cite{RHA}. Last but not least, despite the presence of higher inverse-strain gradients, cracks in equilibrium deformations are exact discontinuities, in contrast with ones arising in phase-field models; see \cite{RHA} for details.

The present work is a follow-up to that of \cite{RHA} and \cite{GH}.  The latter employs the same model presented in the former while also accounting for interaction with a pseudo-rigid elastic 
foundation, the latter of which is constrained to undergo only homogeneous elastic deformations, cf. \cite{VHR}. The analysis in \cite{GH} addresses the nontrivial problem of crack development, not arising in \cite{RHA}.  Our goal here is to find sharp-interface models in the singular limit as the small parameter characterizing the inverse higher-gradient energy  goes to zero.  Of course, our main tool is $\Gamma$-convergence.  The resulting models feature cracks as discontinuous jumps of the deformation, endowed with surface energy on crack faces,  in precise analogy with the Griffith criterion, cf. \cite{G}.
 %albeit relative to the inverse formulation.  

 In Section 2, we summarize the aforementioned models characterized by small interfacial energy.  In both cases, we show that 
a homogeneous solution in compression always corresponds to the unique energy minimizer. Our main results for tensile loading
are presented in Section 3.  After the usual rescaling, the sharp-interface model associated with \cite{RHA} follows as a textbook problem (for phase transitions), and the two energy-minimizing configurations, each characterized by a single end crack, are readily obtained.  

The model from \cite{GH} presents a more challenging problem.  First, due to the interaction with the pseudo-rigid elastic foundation, the energy functional necessarily involves the inverse deformation as well as its first and second derivatives. Moreover, the usual rescaling leads to a model greatly overemphasizing the interaction constant between the brittle rod and the foundation.  A more realistic model emerges by first scaling the interaction constant before rescaling the energy.  The novel sharp-interface model we obtain features a nontrivial interplay between surface energy and the elastic foundation.  As such, energy minimizing configurations are not obvious.
 
We explore energy-minimizing configurations in Section 4.  We first note that any such inverse-deformation field must have 
at least one finite jump in derivative.  We then prove that minimizers, which come in pairs, are characterized remarkably by a uniform, repeating pattern. The repetition number coincides with the number of cracks and is obtained explicitly via an elementary first-derivative test.  We close with a concrete example. The minimizing inverse configurations are determined and plotted along with their associated deformation maps. We also show that the number of cracks in a minimizing configuration increases with increasing load. This and the equally spaced arrangement of cracks capture experimental observations by \cite{M2} in composite biological fibers with an external brittle layer.

\section{Formulation}

We consider a Lennard-Jones type energy density, e.g., 
$W(F)=(1-F^{-1})^{2},$ cf.\ Figure \ref{fig1a}, where $f:[0,1]\to [0,\lambda ]$ is 
the deformation and $F:=f'>0$ denotes the stretch. Here $\lambda>0$ denotes the prescribed average stretch. Our forthcoming analysis does not depend on this specific choice of $W$. Indeed, any continuously differentiable, non-negative function $W:(0,\infty )\to [0,\infty )$ having 
the same qualitative properties, viz.,  convex on $(0,1)$, a single, isolated potential well at 
$F=1$ with $W(\ref{eq1})=0,$ a single inflection point at some point $F>1,$ and a 
horizontal asymptote as $F\to +\infty,$ will suffice. In any case, the inverse stored 
energy is defined by
\begin{equation}
\label{eq1}
W^{\ast }(H):=HW(H^{-1}),
\end{equation}
where $h:[0,\lambda ]\to [0,1]$ is the inverse deformation and $H:={h}'=1/F$ 
denotes the inverse stretch. For the example mentioned above, (\ref{eq1}) yields 
$W^{\ast }(H)=H(1-H)^{2},$ cf.\ Figure \ref{fig1b}. Noting that $W^{\ast }(0)\to 0$ 
as $H\searrow 0,$ we may accordingly extend the domain of $W^{\ast }$ to 
$[0,\infty ).$ Thus, $W^{\ast }(H)>0$ for $H\ne 0,\mbox{1,}$ with $W^{\ast 
}(0)=W^{\ast }(1)=0.$ As already noted in \cite{RHA}, $W^{\ast }$ has the 
appearance of a two-well potential as in models for phase transitions. Here, 
$H=0$ represents the cracked or broken phase. 

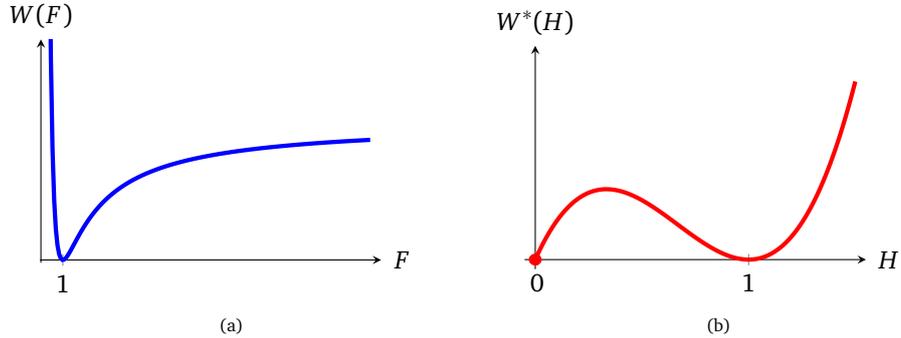
\begin{figure}[h]
  \centering
  \begin{subfigure}[b]{0.45\textwidth}
\begin{tikzpicture}[scale=1]
\begin{axis}[
    width=1\textwidth,
    height=.23\textheight,
    axis x line=center,
    axis y line=center,
    xmin = -0.05, xmax = 15.5,
    ymin = -0.02, ymax = 1.6,
    xtick={1}, xticklabels={1},
    ytick={0}, yticklabels={0},
    domain = 0:15,
    samples = 200,
    xlabel = {$F$},
    ylabel = {$W(F)$},
    every axis x label/.style={at={(ticklabel* cs:1.01)},anchor=west,},
    every axis y label/.style={at={(ticklabel* cs:1.01)},anchor=south,},      ]
    \addplot[ultra thick, blue] {(1-1/x)^2};
\end{axis}
\end{tikzpicture}
\caption{}
\label{fig1a}
\end{subfigure}%
 \quad
 \begin{subfigure}[b]{0.45\textwidth}
\begin{tikzpicture}[scale=1]
\begin{axis}[
    width=1\textwidth,
    height=.23\textheight,
    axis x line=center,
    axis y line=center,
    xmin = -0.05, xmax = 1.55,
    ymin = -0.02, ymax = 0.45,
    xtick={0.01,1}, xticklabels={0,1},
    ytick={0}, yticklabels={0},
    domain = 0:1.5,
    samples = 200,
    xlabel = {$H$},
    ylabel = {$W^*(H)$},
    every axis x label/.style={at={(ticklabel* cs:1.01)},anchor=west,},
    every axis y label/.style={at={(ticklabel* cs:1.01)},anchor=south,},      ]
    \addplot[ultra thick, red] {x*(1-x)^2};
    %\addplot [ultra thick, red ] coordinates {(0,0) (0,0.38) };
    \begin{scope}[yscale=4]
    \draw[red, ultra thick, fill] (5,5) circle (2);
    \end{scope}
\end{axis}
\end{tikzpicture}
\caption{}
\label{fig1b}
\end{subfigure}
\caption{(a) Lennard Jones-type stored energy density $W$. (b) Corresponding 
inverse stored energy function $W^*$. The states corresponding to $H<0$ are inaccessible, rendering $W^*$ a two-well potential with minima at $H=0,1$.}
\label{fig1}
\end{figure}%

Following  \cite{S}, the elastic energy can be expressed as
\begin{equation}
\label{eq2}
\int_0^1 {W({f}'(x))dx=\int_0^\lambda {W^{\ast }} } ({h}'(y))dy.
\end{equation}
We use $x\in[0,1]$ as the reference coordinate and $y\in[0,\lambda]$ as the deformed coordinate throughout, so that $x\mapsto f(x)\in[0,\lambda]$ and $y\mapsto h(y)\in[0,1] $.
Due to the properties of $W^{\ast },$ the minimization of \eqref{eq2} yields an uncountable infinity of solutions, some of which are
charcterized by an infnite number of cracks. 
The situation is comparable to the problem treated in  \cite{E}. As a remedy, we introduce an interfacial energy according to
\begin{equation}
\label{eq3}
E_{\varepsilon } [H]=\int_0^\lambda {\Big( {\frac{\varepsilon 
^{2}}{2}({H}')^{2}+W^{\ast }(H)} \Big)} dy,
\end{equation}
where $\varepsilon >0$ is a small parameter. As shown in \cite{RHA}, this 
first-gradient energy not only regularizes the problem, but also introduces 
small surface energy at crack faces. It is worth observing that \eqref{eq3} is not 
the same as the standard strain-gradient model
\begin{equation}
\label{std}
\int_0^1 {\Big( {\frac{\varepsilon^{2}}{2}({F}')^{2}+W(F)} \Big)} dx.
\end{equation}
The minimization of the latter entails working in a subset of $H^{1}(0,1).$ 
By embedding, all elements of the latter are continuous, viz., $F$ is 
continuous, implying no fracture. While the same holds for \eqref{eq3} (in fact 
$H$ is shown to be $C^{1}$ in \cite{RHA}, the inverse stretch field 
readily accommodates $H=0$ corresponding to $F\sim \infty$, indicating 
fracture. Finally, we append the compatibility condition
\begin{equation}
\label{eq4}
\int_0^\lambda {Hdy=1} 
\end{equation}
to the formulation (\ref{eq3}). 

Following \cite{GH}, we also consider the interaction with a pseudo-rigid elastic 
foundation via an addition to \eqref{eq3}, which is necessarily expressed in terms of the 
inverse deformation $h$:
\begin{equation}
\label{eq5}
U_{\varepsilon } [h]=\int_0^\lambda {\Big( {\frac{\varepsilon 
^{2}}{2}({h}'')^{2}+W^{\ast }({h}')+\frac{k{h}'}{2}(y-\lambda h)^{2}} 
\Big)} dy, \quad {h}' \ge  0,
\end{equation}
where $k>0$ represents the interaction stiffness between the pseudo-rigid elastic 
foundation and the brittle specimen. In this case, we drop (\ref{eq4}) and 
supplement (\ref{eq5}) with the boundary conditions
\begin{equation}
\label{eq6}
h(0)=0,\quad h(\lambda )=1.
\end{equation}
We finish this section with a result for compressive loadings:

\begin{proposition}\label{prop1}
If $0<\lambda \le  1,$ then 
$$
\min \big\{E_{\varepsilon}[H]: H\in H^1(0,\lambda), H\ge 0 \mbox{ a.e., } \int_0^\lambda H \,dy=1\big\}
$$ 
and 
$$
\min \big\{U_{\varepsilon}[h]: h\in H^2(0,\lambda), h'\ge 0 \mbox{ a.e., } h(0)=h(\lambda)=1 \big\}
$$ 
are attained by the homogeneous inverse deformation  $h(y)=y/\lambda$, with constant inverse stretch $H=1/\lambda$. In particular, the corresponding minima are $E_\varepsilon[1/\lambda]=U_\varepsilon[y/\lambda]=\lambda W^\ast (1/\lambda)=W(\lambda).$ 
\end{proposition}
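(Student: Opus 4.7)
The plan is to exploit that in both cases the admissible class forces the average inverse stretch to equal $1/\lambda$: in the first case directly via (\ref{eq4}), and in the second via $\int_0^\lambda h'\,dy = h(\lambda)-h(0) = 1$ from (\ref{eq6}). When $0 < \lambda \le 1$, this average lies in the convex branch of $W^*$, away from the cracked well at $H=0$. Since the gradient terms $\frac{\varepsilon^2}{2}(H')^2$ and $\frac{\varepsilon^2}{2}(h'')^2$ are non-negative and vanish on the homogeneous configuration, while the foundation term $\frac{k h'}{2}(y-\lambda h)^2$ is non-negative on $\{h'\ge 0\}$ and vanishes precisely when $h(y)=y/\lambda$, it suffices to prove
\[
\int_0^\lambda W^*(H)\,dy \ge \lambda W^*(1/\lambda)
\]
for every admissible $H\ge 0$ with $\int_0^\lambda H\,dy = 1$.

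The first ingredient is convexity of $W^*$ on $[1,\infty)$. A direct differentiation of (\ref{eq1}) gives $(W^*)''(H) = W''(1/H)/H^{3}$, so the convexity of $W$ on $(0,1)$ (which is among the qualitative assumptions on $W$) transfers to convexity of $W^*$ on $(1,\infty)$. At $H=1$ one has $W^*(1)=W(1)=0$ and $(W^*)'(1)=W(1)-W'(1)=0$, since $F=1$ is the well of $W$.

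The core step is a global tangent-line bound. Put $H_0 := 1/\lambda \ge 1$ and define
\[
L(H) := W^*(H_0) + (W^*)'(H_0)(H-H_0).
\]
Convexity on $[1,\infty)$ immediately gives $L \le W^*$ there. For $H\in[0,1]$, monotonicity of $(W^*)'$ on $[1,\infty)$ together with $(W^*)'(1)=0$ yields $(W^*)'(H_0)\ge 0$, so $L$ is non-decreasing; convexity of $W^*$ on $[1,H_0]$ also forces $L(1)\le W^*(1)=0$, hence $L \le 0 \le W^*$ throughout $[0,1]$. Thus $L \le W^*$ pointwise on $[0,\infty)$. Integrating and using $\int_0^\lambda H\,dy = 1 = \lambda H_0$ cancels the slope contribution and delivers the required inequality.

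Equality is attained by the homogeneous configuration, which is admissible and zeroes every non-negative contribution while making the $W^*$ integral equal $\lambda W^*(1/\lambda)$; the identity $\lambda W^*(1/\lambda)=W(\lambda)$ is immediate from (\ref{eq1}). The main obstacle is crossing the non-convex valley $[0,1]$ of $W^*$: a priori the tangent at $H_0$ could rise above $W^*$ there. This is averted precisely because both $W^*$ and $(W^*)'$ vanish at $H=1$, which pins the tangent at $H_0$ below the horizontal axis throughout $[0,1]$, where $W^*\ge 0$.
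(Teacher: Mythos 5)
Your proof is correct, and at its core it is the same Jensen-type argument the paper uses: drop the non-negative gradient and foundation terms, reduce to bounding $\int_0^\lambda W^*$ from below by $\lambda W^*(1/\lambda)$, and exploit the fact that at $H_0 = 1/\lambda \ge 1$ the graph of $W^*$ admits a supporting affine minorant. The difference is one of packaging. The paper invokes the convex envelope $W^*_c$, applies Jensen's inequality to it, and then asserts (without proof) that $W^*_c = W^*$ on $[1,\infty)$, which requires knowing $W^*(1)=(W^*)'(1)=0$ and that $W^*$ is convex on $[1,\infty)$. You instead construct the tangent line $L$ at $H_0$ explicitly and verify by hand that $L \le W^*$ on all of $[0,\infty)$, using exactly those two facts, namely $(W^*)''(H) = W''(1/H)/H^3 \ge 0$ for $H>1$ and $W^*(1)=(W^*)'(1)=0$; integrating $L$ against the constraint $\int_0^\lambda H\,dy = 1$ then yields the bound directly. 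In effect you have unpacked both the convex-envelope identity and the proof of Jensen's inequality into a single explicit tangent-line estimate. This is more self-contained and supplies the justification for the step the paper only states, at the cost of slightly more computation; the paper's version is shorter but leans on the reader to verify the envelope identity.

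One small point worth keeping in mind: the proposition as printed contains a typo in the boundary conditions for the $U_\varepsilon$ problem, writing $h(0)=h(\lambda)=1$; the correct conditions are those of equation~(\ref{eq6}), $h(0)=0$, $h(\lambda)=1$, which you implicitly and correctly use when you write $\int_0^\lambda h'\,dy = h(\lambda)-h(0)=1$.
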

\begin{proof}
Let $W_{c}^{\ast }$ denote the convex envelope of $W^{\ast },$ cf.\ Figure 
\ref{fig2}. Then by \eqref{eq5}, \eqref{eq6} and Jensen's inequality, we find
\begin{eqnarray*}
 U_{\varepsilon} [h]&\ge& \int_0^\lambda {\Big( {\frac{\varepsilon 
^{2}}{2}({h}'')^{2}+W_{c}^{\ast } ({h}')+\frac{k{h}'}{2}(y-\lambda h)^{2}} 
\Big)} dy \\ 
&\ge& \int_0^\lambda {W_{c}^{\ast } 
({h}')} dy\geqslant \lambda W_{c}^{\ast } \Big( {\frac{1}{\lambda 
}\int_0^\lambda {{h}'} dy} \Big)=\lambda W_{c}^{\ast }(1/\lambda ), \\ 
 \end{eqnarray*}
for all $h\in H^{2}(0,\lambda )$. Since $W$ is non-negative and convex in $(0,1)$, it follows that $W_{c}^{\ast }(H)= W^{\ast }(H)$ for all $H\ge 1$, cf. Figure \ref{fig2}. From this remark and observing that  $1/\lambda \ge 1,$ we  deduce that 
$$
U_{\varepsilon} [h]\ge\lambda W^\ast(1/\lambda )  
=E_\varepsilon[1/\lambda]=U_\varepsilon[y/\lambda],$$ for all $h\in H^{2}(0,\lambda )$.  
Clearly the same argument 
applies to the minimizer of $E_{\varepsilon}$.
\end{proof}

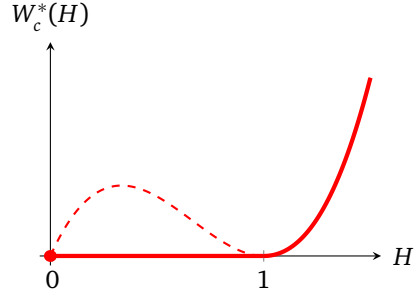
\begin{figure}[h]
\centering
 \begin{tikzpicture}[scale=1]
\begin{axis}[
    width=.45\textwidth,
    height=.23\textheight,
    axis x line=center,
    axis y line=center,
    xmin = -0.05, xmax = 1.55,
    ymin = -0.02, ymax = 0.45,
    xtick={0.01,1}, xticklabels={0,1},
    ytick={0}, yticklabels={0},
    domain = 0:1.5,
    samples = 200,
    xlabel = {$H$},
    ylabel = {$W^*_c(H)$},
    every axis x label/.style={at={(ticklabel* cs:1.01)},anchor=west,},
    every axis y label/.style={at={(ticklabel* cs:1.01)},anchor=south,},      ]
    \addplot[domain = 1:1.5,ultra thick, red] {x*(1-x)^2};
    %\addplot [ultra thick, red ] coordinates {(0,0) (0,0.38) };
    \addplot[thick, red, dashed] {x*(1-x)^2};
    \addplot [ultra thick, red ] coordinates {(0,0) (1,0) };
    \begin{scope}[yscale=4]
    \draw[red, ultra thick, fill] (5,5) circle (2);
    \end{scope}
\end{axis}
\end{tikzpicture}
\caption{Convex envelope of $W^{\ast }.$ }
\label{fig2}
\end{figure}%

\section{Sharp-Interface Models via  $\Gamma$-Convergence}

We first consider (\ref{eq3}), (\ref{eq4}), and rescale the former according to
\begin{equation}
\label{eq7}
I_{\varepsilon } [H]:=\frac{1}{\varepsilon }E_{\varepsilon } 
[H]=\int_0^\lambda {\Big( {\frac{\varepsilon 
}{2}({H}')^{2}+\frac{1}{\varepsilon }W^{\ast }(H)} \Big)} dy, \quad
\int_0^\lambda {Hdy=1},\quad \mbox{for\, }H\geqslant 0,
\end{equation}
where $\varepsilon >0,$ with $I_{\varepsilon } :=+\infty $ otherwise. We also 
assume the physically reasonable growth condition 
\begin{equation*}
%\label{eq7bis}
W(F)\ge C \frac 1 F \mbox{\, for\, all\, }0<F\le \frac 1M,
\end{equation*}
for constants $C>0$ and  $M>1.$ 
From (1), this is equivalent to
\begin{equation}
\label{eq8}
W^*(H)\geqslant CH^{2}\mbox{\, for\, all\, }H\geqslant M,
\end{equation}
for constants $C>0$ and  $M>1.$ For instance, (\ref{eq8}) is satisfied for 
$W^{\ast }(H)=H(1-H)^{2},$ mentioned previously. Moreover, (\ref{eq8}) is benign in 
our setting since it entails only highly compressive behavior. In any case, 
(\ref{eq7}) is a textbook problem for $\Gamma \mbox{-}$convergence as $\varepsilon 
\searrow 0,$ e.g., \cite{ABM}, \cite{B}; the general result for bounded domains in 
$\mathbb{R}^{n}$ was first presented in \cite{M}. In what follows, $PC(0,\lambda 
)$ denotes the space of piecewise constant functions on $(0,\lambda ),$ with 
$\# D(u)$ denoting the number of discontinuities of $u\in PC(0,\lambda ).$ We 
quote the following result:
\begin{proposition}\label{prop2}
For $\lambda \geqslant 1,$ the family 
$\{I_{\varepsilon } \}_{\varepsilon >0}$  $\Gamma$-converges  
in the strong $L^{1}(0,\lambda )$ topology to
\begin{equation}\label{eq9}
I[H]=
\begin{cases}
C_{W^{\ast }} \,  \# D(H) & H\in PC(0,\lambda ), \quad H\in \{0,1\}  \mbox{ a.e. 
in\, }(0,\lambda),\\ 
&\mbox{and } \displaystyle \int_0^\lambda {Hdy=1,}\\
+\infty &\mbox{otherwise,}
\end{cases}
\end{equation}
where 
$C_{W^{\ast }} :=\int_0^1 {\sqrt {2W^{\ast }(\tau )} } d\tau.$ 
Moreover, if $\{\varepsilon_{n} \}$ and $\{H_{n} \}\subset H^{1}(0,\lambda 
)$ are sequences such that $\varepsilon_{n} \searrow 0$ and 
$\{I_{\varepsilon_{n} } [H_{n} ]\}$ is uniformly bounded, then $\{H_{n} 
\}$ has a convergent subsequence in $L^{1}(0,\lambda ),$ and every limit point 
$H_{\ast }$ is an element of $PC(0,\lambda )$ with $H_{\ast } \in \{0,1\} 
\mbox{ a.e.\  in }(0,1).$
\end{proposition}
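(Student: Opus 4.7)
This is a classical Modica--Mortola-type $\Gamma$-convergence problem: a double-well potential $W^*$ vanishing at $0$ and $1$, penalized by $\varepsilon$-small gradient energy, and subject to the sign constraint $H\ge 0$ and the affine mass constraint $\int_0^\lambda H\,dy=1$. My plan is to prove the three standard ingredients in the usual order---compactness, liminf inequality, and recovery sequence---making minor modifications to accommodate the one-sided well at $0$, the unbounded admissible range of $H$, and the mass constraint.

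The key auxiliary function is $\Phi(u):=\int_0^u\sqrt{2W^*(\tau)}\,d\tau$, which is continuous and strictly increasing on $[0,\infty)$ with $\Phi(0)=0$, $\Phi(1)=C_{W^*}$; the growth condition (\ref{eq8}) forces $\Phi(u)\to\infty$ as $u\to\infty$. The Modica--Mortola pointwise inequality
\[
\tfrac{\varepsilon}{2}(H')^2+\tfrac{1}{\varepsilon}W^*(H)\ \ge\ \sqrt{2W^*(H)}\,|H'|\ =\ \bigl|(\Phi\circ H)'\bigr|
\]
gives $I_\varepsilon[H]\ge \mathrm{TV}(\Phi\circ H)$. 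Suppose $I_{\varepsilon_n}[H_n]\le C$. Then $\Phi(H_n)$ is bounded in $BV(0,\lambda)$ and hence in $L^\infty$, so by properness of $\Phi$ the $H_n$ are themselves uniformly bounded. BV compactness together with continuity of $\Phi^{-1}$ delivers a subsequence with $\Phi(H_n)\to v$ in $L^1$ and a.e., hence $H_n\to H_*:=\Phi^{-1}(v)$ a.e.\ and in $L^1$ by dominated convergence. Since $\int_0^\lambda W^*(H_n)\,dy\le C\varepsilon_n\to 0$, we have $W^*(H_*)=0$ a.e., so $H_*\in\{0,1\}$ a.e.; being two-valued and BV, $H_*\in PC(0,\lambda)$. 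The mass constraint passes to the limit, placing $H_*$ in the domain of $I$, and lower semicontinuity of total variation then delivers
\[
\liminf_n I_{\varepsilon_n}[H_n]\ \ge\ \mathrm{TV}(\Phi\circ H_*)\ =\ |\Phi(1)-\Phi(0)|\,\#D(H_*)\ =\ C_{W^*}\,\#D(H_*).
\]

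For the recovery sequence, given admissible $H\in PC(0,\lambda)$ with values in $\{0,1\}$ and $\#D(H)=N$, I would place at each jump $y_i$ the rescaled optimal profile $\eta_n(y)=\eta((y-y_i)/\varepsilon_n)$ arising from the phase-plane orbit $\eta'=\pm\sqrt{2W^*(\eta)}$ connecting $0$ and $1$, glued to the constant pieces on either side. Near $H=1$ the profile approaches $1$ exponentially (since $W^*$ is quadratic there) and must be truncated on a window of width $O(\varepsilon_n|\log\varepsilon_n|)$; near $H=0$ the linear vanishing of $W^*$ lets the profile reach $0$ in finite rescaled length, trivializing that side. A standard energy computation gives an asymptotic contribution of $C_{W^*}+o(1)$ per jump. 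The gluing perturbs the integral mass by $O(\varepsilon_n)$, which I would restore by shifting a single plateau boundary by the same order, at no cost to the leading-order energy.

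The principal obstacle I anticipate is coordinating the sign constraint, the mass constraint, and the asymmetric behaviour of $W^*$ near the two wells in the recovery construction: the approximating $H_n$ must remain nonnegative, the $O(\varepsilon_n)$ mass error must be repaired without introducing additional interfaces, and the exponential truncation at the $H=1$ well must not leak into the cost. All three can be managed because $\eta\ge 0$ along the optimal orbit, the mass constraint is affine and one-dimensional, and the exponential tail contributes only $o(1)$ to the energy per truncated transition. Once these points are handled carefully, the remainder of the argument is the standard Modica--Mortola package, with finiteness of $C_{W^*}$ guaranteed by the continuity and controlled growth of $\sqrt{W^*}$ on $[0,1]$.
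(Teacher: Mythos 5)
Your argument is the standard Modica--Mortola package (pointwise inequality $\tfrac{\varepsilon}{2}(H')^2+\tfrac1\varepsilon W^*(H)\ge |(\Phi\circ H)'|$, BV compactness for $\Phi\circ H_n$, lower semicontinuity of total variation, and a rescaled optimal-profile recovery sequence with an affine mass correction), and it is correct. Note that the paper itself does not supply a proof of Proposition \ref{prop2}: it states that the rescaled functional is a textbook $\Gamma$-convergence problem and cites \cite{ABM}, \cite{B} and the original result of \cite{M}, which is precisely the argument you reconstruct. The only minor point worth stating explicitly in your compactness step is how the $L^\infty$ bound on $\Phi(H_n)$ follows: the total-variation bound alone controls oscillation, and it is the mass constraint $\int_0^\lambda H_n\,dy=1$ (via the mean value theorem) that pins $H_n$ down at one point, from which the $L^\infty$ bound and then the uniform bound on $H_n$ (properness of $\Phi$ from \eqref{eq8}) follow.
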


The minimum of (\ref{eq9}) is easily deduced. We first note that $H\in PC(0,\lambda )$ has finite  energy, i.e., $I[H]<+\infty$, if and only if  $H\in \{0,1\}$  a.e.\  in $(0,\lambda)$ and the set $\{y\in (0,\lambda): H(y)=1\}$ has measure equal to 1.
Thus, if $\lambda =1,$ $I$ is uniquely 
minimized by $H\equiv 1$. This corresponds to the 
undeformed specimen whose energy is $I=0$. For $\lambda >1,$ the condition $I[H]<+\infty$ necessarily implies that the set $\{y\in (0,\lambda): H(y)=0\}$ has measure $\lambda-1$, which in turn implies that $\# D(H)\ge 1$.
It follows that $I$ is minimized by a single crack and the minimum is $I=C_{W^{\ast }}$. 

Figure \ref{fig3a} depicts the inverse-stretch field  $H=1$ in $(0,1)$ with $H=0$  in $(1,\lambda ),$
which minimizes $I$. The associated deformation mapping is shown in Figure \ref{fig3b}; a single crack occurs at the right extreme.

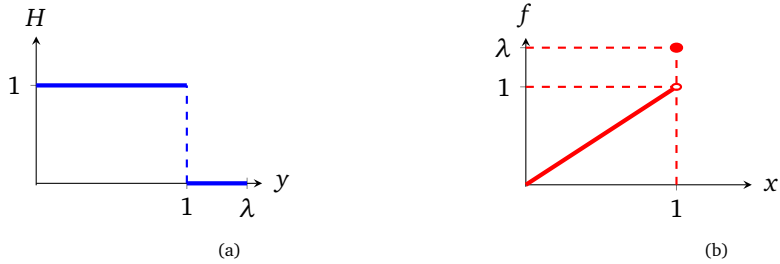
\begin{figure}[h]%\label{fig3}
\centering
  \begin{subfigure}[b]{0.45\textwidth}
\begin{tikzpicture}[scale=1]
\begin{axis}[
    width=.75\textwidth,
    height=.18\textheight,
    axis x line=center,
    axis y line=center,
    xmin = 0, xmax = 1.5,
    ymin = -0.02, ymax = 1.5,
    xtick={0,1,1.4}, xticklabels={0,1,$\lambda$},
    ytick={0,1}, yticklabels={0,1},
    domain = 0:1.5,
    samples = 200,
    xlabel = {$y$},
    ylabel = {$H$},
    every axis x label/.style={at={(ticklabel* cs:1.01)},anchor=west,},
    every axis y label/.style={at={(ticklabel* cs:1.01)},anchor=south,},      ]
    \addplot[domain = 0:1,ultra thick, blue] {1};
    \addplot[domain =1:1.4,ultra thick, blue] {0};
    \addplot [thick, blue, dashed ] coordinates {(1,0) (1,1) };
\end{axis}
\end{tikzpicture}
\caption{}
\label{fig3a}
\end{subfigure}%
 \quad
 \begin{subfigure}[b]{0.45\textwidth}
\begin{tikzpicture}[scale=1]
\begin{axis}[
    width=.75\textwidth,
    height=.18\textheight,
    axis x line=center,
    axis y line=center,
    xmin = 0, xmax = 1.5,
    ymin = -0.02, ymax = 1.5,
    xtick={0,1}, xticklabels={0,1},
    ytick={0,1,1.4}, yticklabels={0,1,$\lambda$},
    domain = 0:1.5,
    samples = 200,
    xlabel = {$x$},
    ylabel = {$f$},
    every axis x label/.style={at={(ticklabel* cs:1.01)},anchor=west,},
    every axis y label/.style={at={(ticklabel* cs:1.01)},anchor=south,},      ]
    \addplot[domain = 0:0.98, ultra thick, red] {x};
    \draw[red, thick] (axis cs: 1, 1) circle [radius=3];
    \draw[red, ultra thick, fill] (axis cs: 1, 1.4) circle [radius=3];
    \addplot [thick, red, dashed ] coordinates {(1,0) (1,0.98) };
    \addplot [thick, red, dashed ] coordinates {(1,1.03) (1,1.4) };
    \addplot [thick, red, dashed ] coordinates {(0,1.4) (1,1.4) };
    \addplot [thick, red, dashed ] coordinates {(0,1) (0.97,1) };
\end{axis}
\end{tikzpicture}
\caption{}
\label{fig3b}
\end{subfigure}
\caption{(a) An inverse strain that minimizes $I$. (b) The associated broken configuration.}
\label{fig3}
\end{figure}%

Several admissible inverse-stretch fields are depicted in Figure \ref{fig4}: (A) is the same as Figure \ref{fig3} corresponding 
to a crack at the extreme right; (B) shows a single crack at the extreme left. Like configuration (A), this field minimizes $I$. 
The inverse-strain field shown in (C) features cracks at both extremes; inverse-stretch field (D) has an open crack on the 
interior $(0,\lambda ),$ while (E) and (F) depict 
inverse-stretch fields having both interior and end cracks. In each of the cases (C)-(F), 
it follows that $I\ge 2C_{W^{\ast }}.$ Finally, we point out that the single-end-failure prediction 
agrees with the results obtained in \cite{RHA} for $\varepsilon >0.$

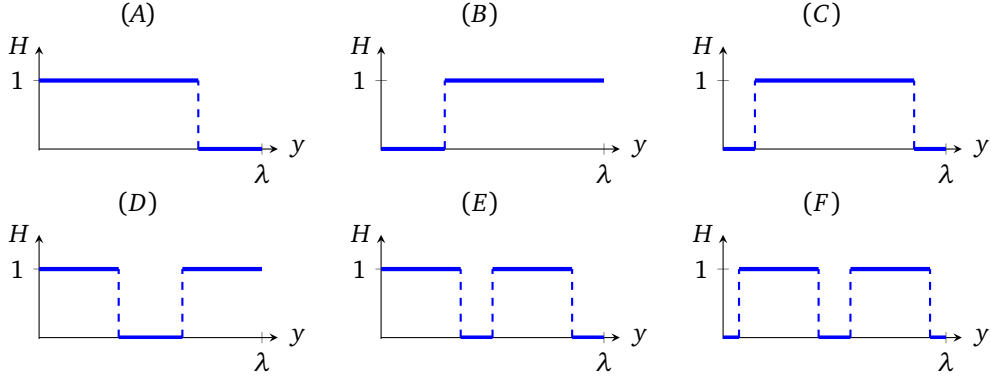
\begin{figure}[h]%\label{fig5}
\centering
\begin{tikzpicture}[scale=1]
\begin{axis}[
    width=.35\textwidth,
    height=.15\textheight,
    axis x line=center,
    axis y line=center,
    xmin = 0, xmax = 1.5,
    ymin = -0.02, ymax = 1.5,
    xtick={1.4}, xticklabels={$\lambda$},
    ytick={0,1}, yticklabels={0,1},
    domain = 0:1.5,
    samples = 200,
    xlabel = {$y$},
    ylabel = {$H$},
    every axis x label/.style={at={(ticklabel* cs:1.01)},anchor=west,},
    every axis y label/.style={at={(ticklabel* cs:1.01)},anchor=east,},      ]
    \addplot[domain = 0:1,ultra thick, blue] {1};
    \addplot[domain =1:1.4,ultra thick, blue] {0};
    \addplot [thick, blue, dashed ] coordinates {(1,0) (1,1) };
\end{axis}
\draw (1.3,1.8) node  {$(A)$};
% B
\begin{scope}[xshift=4.5cm]
\begin{axis}[
    width=.35\textwidth,
    height=.15\textheight,
    axis x line=center,
    axis y line=center,
    xmin = 0, xmax = 1.5,
    ymin = -0.02, ymax = 1.5,
    xtick={1.4}, xticklabels={$\lambda$},
    ytick={0,1}, yticklabels={0,1},
    domain = 0:1.5,
    samples = 200,
    xlabel = {$y$},
    ylabel = {$H$},
    every axis x label/.style={at={(ticklabel* cs:1.01)},anchor=west,},
    every axis y label/.style={at={(ticklabel* cs:1.01)},anchor=east,},      ]
    \addplot[domain = 0.4:1.4,ultra thick, blue] {1};
    \addplot[domain =0:0.4,ultra thick, blue] {0};
    \addplot [thick, blue, dashed ] coordinates {(0.4,0) (0.4,1) };
\end{axis}
\draw (1.3,1.8) node  {$(B)$};
\end{scope}
% C
\begin{scope}[xshift=9cm]
\begin{axis}[
    width=.35\textwidth,
    height=.15\textheight,
    axis x line=center,
    axis y line=center,
    xmin = 0, xmax = 1.5,
    ymin = -0.02, ymax = 1.5,
    xtick={1.4}, xticklabels={$\lambda$},
    ytick={0,1}, yticklabels={0,1},
    domain = 0:1.5,
    samples = 200,
    xlabel = {$y$},
    ylabel = {$H$},
    every axis x label/.style={at={(ticklabel* cs:1.01)},anchor=west,},
    every axis y label/.style={at={(ticklabel* cs:1.01)},anchor=east,},      ]
    \addplot[domain = 0.2:1.2,ultra thick, blue] {1};
    \addplot[domain =1.2:1.4,ultra thick, blue] {0};
    \addplot[domain =0:0.2,ultra thick, blue] {0};
    \addplot [thick, blue, dashed ] coordinates {(0.2,0) (0.2,1) };
    \addplot [thick, blue, dashed ] coordinates {(1.2,0) (1.2,1) };
\end{axis}
\draw (1.3,1.8) node  {$(C)$};
\end{scope}
% D
\begin{scope}[xshift=0cm, yshift=-2.5cm]
\begin{axis}[
    width=.35\textwidth,
    height=.15\textheight,
    axis x line=center,
    axis y line=center,
    xmin = 0, xmax = 1.5,
    ymin = -0.02, ymax = 1.5,
    xtick={1.4}, xticklabels={$\lambda$},
    ytick={0,1}, yticklabels={0,1},
    domain = 0:1.5,
    samples = 200,
    xlabel = {$y$},
    ylabel = {$H$},
    every axis x label/.style={at={(ticklabel* cs:1.01)},anchor=west,},
    every axis y label/.style={at={(ticklabel* cs:1.01)},anchor=east,},      ]
    \addplot[domain = 0:0.5,ultra thick, blue] {1};
    \addplot[domain = 0.9:1.4,ultra thick, blue] {1};
    \addplot[domain =0.5:0.9,ultra thick, blue] {0};
    \addplot [thick, blue, dashed ] coordinates {(0.5,0) (0.5,1) };
    \addplot [thick, blue, dashed ] coordinates {(0.9,0) (0.9,1) };
\end{axis}
\draw (1.3,1.8) node  {$(D)$};
\end{scope}
% E
\begin{scope}[xshift=4.5cm, yshift=-2.5cm]
\begin{axis}[
    width=.35\textwidth,
    height=.15\textheight,
    axis x line=center,
    axis y line=center,
    xmin = 0, xmax = 1.5,
    ymin = -0.02, ymax = 1.5,
    xtick={1.4}, xticklabels={$\lambda$},
    ytick={0,1}, yticklabels={0,1},
    domain = 0:1.5,
    samples = 200,
    xlabel = {$y$},
    ylabel = {$H$},
    every axis x label/.style={at={(ticklabel* cs:1.01)},anchor=west,},
    every axis y label/.style={at={(ticklabel* cs:1.01)},anchor=east,},      ]
    \addplot[domain = 0:0.5,ultra thick, blue] {1};
    \addplot[domain = 0.7:1.2,ultra thick, blue] {1};
    \addplot[domain =0.5:0.7,ultra thick, blue] {0};
    \addplot[domain =1.2:1.4,ultra thick, blue] {0};
    \addplot [thick, blue, dashed ] coordinates {(0.5,0) (0.5,1) };
    \addplot [thick, blue, dashed ] coordinates {(0.7,0) (0.7,1) };
    \addplot [thick, blue, dashed ] coordinates {(1.2,0) (1.2,1) };
\end{axis}
\draw (1.3,1.8) node  {$(E)$};
\end{scope}
% F
\begin{scope}[xshift=9cm, yshift=-2.5cm]
\begin{axis}[
    width=.35\textwidth,
    height=.15\textheight,
    axis x line=center,
    axis y line=center,
    xmin = 0, xmax = 1.5,
    ymin = -0.02, ymax = 1.5,
    xtick={1.4}, xticklabels={$\lambda$},
    ytick={0,1}, yticklabels={0,1},
    domain = 0:1.5,
    samples = 200,
    xlabel = {$y$},
    ylabel = {$H$},
    every axis x label/.style={at={(ticklabel* cs:1.01)},anchor=west,},
    every axis y label/.style={at={(ticklabel* cs:1.01)},anchor=east,},      ]
    \addplot[domain = 0.1:0.6,ultra thick, blue] {1};
    \addplot[domain = 0.8:1.3,ultra thick, blue] {1};
    \addplot[domain =0:0.1,ultra thick, blue] {0};
    \addplot[domain =0.6:0.8,ultra thick, blue] {0};
    \addplot[domain =1.3:1.4,ultra thick, blue] {0};
    \addplot [thick, blue, dashed ] coordinates {(0.1,0) (0.1,1) };
    \addplot [thick, blue, dashed ] coordinates {(0.6,0) (0.6,1) };
    \addplot [thick, blue, dashed ] coordinates {(0.8,0) (0.8,1) };
    \addplot [thick, blue, dashed ] coordinates {(1.3,0) (1.3,1) };
\end{axis}
\draw (1.3,1.8) node  {$(F)$};
\end{scope}
\end{tikzpicture}
\caption{Several inverse strains having finite energy $I$.}
\label{fig4}
\end{figure}%

Directly employing the same energy scaling in (\ref{eq5}), we arrive at a model 
overemphasizing the strength of the stiffness $k.$ Accordingly, we first 
assume $k=\varepsilon \mu .$ The rescaling of the energy now leads to
\begin{equation}
\label{eq10}
V_{\varepsilon } [h]:=\frac{1}{\varepsilon }U_{\varepsilon } 
[h]=\int_0^\lambda {\Big( {\frac{\varepsilon 
}{2}({h}'')^{2}+\frac{1}{\varepsilon }W^{\ast }({h}')+\frac{\mu 
{h}'}{2}(y-\lambda h)^{2}} \Big)} dy,\quad {h}'>0,
\end{equation}
subject to (\ref{eq6}), where $V_{\varepsilon } :=+\infty $ otherwise. 

In what follows, let $PC^{1}(0,\lambda)$ denote the space of continuous 
functions having a piecewise constant derivative on $(0,\lambda ),$ with $\# 
D({u}')$ denoting the number of discontinuities of ${u}'.$ 

\begin{proposition}\label{prop3}
For $\lambda \geqslant 1,$ the family 
$\{V_{\varepsilon } \}_{\varepsilon >0}$ $\Gamma$-converges in the 
strong $H^{1}(0,\lambda )$ topology to 
\begin{equation}\label{eq11}
V[h]=\begin{cases}
 \displaystyle C_{W^*} \# D({h}')+\frac{\mu }{2}\int_0^\lambda {{h}'(y-\lambda h)^{2}dy} & h\in PC^{1}(0,\lambda ),\\ 
 & {h}'\in \{0,1\} \mbox{ a.e.\ in }(0,\lambda ),\\
 & \mbox{and } h(0)=0,\quad h(\lambda )=1,\\
 +\infty &\mbox{otherwise,} 
\end{cases}
\end{equation}
where $C_{W^{\ast }} :=\int_0^1 {\sqrt {2W^{\ast }(\tau )} } d\tau .$ 
Moreover, if $\{\varepsilon_{n} \}$ and $\{h_{n} \}\subset H^{2}(0,\lambda 
)$ are sequences such that $\varepsilon_{n} \searrow 0$ and $\{V_{\varepsilon_{n} } [h_{n} ]\}$ is uniformly bounded, then $\{h_{n} \}$ has a convergent 
subsequence in $H^{1}(0,\lambda ),$ and every limit point $h_{\ast } $ is an 
element of $PC^{1}(0,\lambda )$ with ${h}'_{\ast } \in \{0,1\} 
\mbox{ a.e.\ in\, }(0,\lambda)$, and satisfies the boundary conditions $h_{\ast}(0)=0$ and $h_{\ast }(\lambda )=1$.
\end{proposition}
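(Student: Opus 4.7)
The plan is to reduce Proposition \ref{prop3} to Proposition \ref{prop2} by identifying the inverse stretch $H = h'$ and treating the foundation term $\frac{\mu}{2}\int_0^\lambda h'(y-\lambda h)^2\, dy$ as a perturbation that is continuous in the strong $H^1$ topology. The key algebraic observation is that the boundary conditions $h(0)=0$, $h(\lambda)=1$ are equivalent to the integral constraint $\int_0^\lambda h'\, dy = 1$ appearing in Proposition \ref{prop2}, so the first two terms of $V_\varepsilon$ coincide with $I_\varepsilon$ applied to $h'$. I would then carry out the three standard steps of a $\Gamma$-convergence proof.

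For compactness, I start from $\{h_n\} \subset H^2(0,\lambda)$ with $V_{\varepsilon_n}[h_n]\le C$. Nonnegativity of the foundation term gives $I_{\varepsilon_n}[h_n']\le C$, and Proposition \ref{prop2} applied to $H_n := h_n'$ yields a subsequence with $H_n \to H^{\ast}$ in $L^1$, where $H^{\ast}\in PC(0,\lambda)$ takes values in $\{0,1\}$ a.e.\ and $\int_0^\lambda H^{\ast}\, dy = 1$. To upgrade to $L^2$, I split on $\{H_n\le M\}$ versus $\{H_n>M\}$: on the first set boundedness plus $L^1$-convergence give $L^2$-convergence, while on the second set the growth assumption (\ref{eq8}) combined with $I_{\varepsilon_n}[H_n]\le C$ forces both $|\{H_n>M\}|$ and $\int_{\{H_n>M\}}H_n^2\, dy$ to vanish. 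Using $h_n(0)=0$, this gives $h_n \to h^{\ast}$ in $H^1$, with $h^{\ast}(y):=\int_0^y H^{\ast}(s)\, ds$ satisfying $h^{\ast}(\lambda)=1$.

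For the liminf inequality, assuming $h_n \to h^{\ast}$ strongly in $H^1$, Proposition \ref{prop2} yields $\liminf I_{\varepsilon_n}[h_n'] \ge C_{W^{\ast}}\,\# D((h^{\ast})')$. The Sobolev embedding $H^1\hookrightarrow C^0$ gives $h_n \to h^{\ast}$ uniformly, so $(y-\lambda h_n)^2 \to (y-\lambda h^{\ast})^2$ uniformly, and combined with $h_n' \to (h^{\ast})'$ in $L^2$ the foundation term passes continuously to the limit. For the recovery sequence, given an admissible $h^{\ast}$ with $(h^{\ast})'\in\{0,1\}$ a.e.\ and $\int_0^\lambda (h^{\ast})'\, dy = 1$, I would apply the usual Modica--Mortola transition-profile construction to $H^{\ast}:=(h^{\ast})'$, obtaining $H_n\in H^1$ with $H_n\ge 0$, $\int_0^\lambda H_n\, dy = 1$, $I_{\varepsilon_n}[H_n]\to C_{W^{\ast}}\,\#D(H^{\ast})$, and $H_n\to H^{\ast}$ in $L^2$ by the same splitting argument. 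Setting $h_n(y):=\int_0^y H_n(s)\, ds$ produces the required $H^2$ recovery sequence satisfying the boundary conditions exactly, and the foundation term converges as in the liminf step.

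The main obstacle I expect is the upgrade of convergence from $L^1$ of $h_n'$ to strong $H^1$ of $h_n$: the bare conclusion of Proposition \ref{prop2} is only $L^1$, which suffices neither for the continuity of the foundation term nor for the topology in which $\Gamma$-convergence is claimed. The growth condition (\ref{eq8}) together with the equipartition structure of the Modica--Mortola functional is precisely what controls $h_n'$ on the set where it is large. A secondary technical point is verifying that the Modica--Mortola construction can be arranged to satisfy the volume constraint $\int_0^\lambda H_n\, dy = 1$ exactly so that $h_n(\lambda)=1$ holds on the nose; this is standard but needs to be checked when lifting the argument from $I_\varepsilon$ to $V_\varepsilon$.
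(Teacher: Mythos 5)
Your proof is correct and follows the same overall strategy as the paper: reduce to Proposition~\ref{prop2} via the identification $H = h'$, exploit the equivalence between the boundary conditions \eqref{eq6} and the mass constraint $\int_0^\lambda h'\,dy = 1$, and handle the foundation term $\frac{\mu}{2}\int_0^\lambda h'(y-\lambda h)^2\,dy$ as an $\varepsilon$-independent perturbation that is continuous in the strong $H^1(0,\lambda)$ topology. The one place where your treatment diverges from the paper's is in the technical upgrades of convergence: the paper simply cites \cite{B} for the fact that the Modica--Mortola $\Gamma$-convergence statement of Proposition~\ref{prop2} also holds in the strong $L^2(0,\lambda)$ topology, and cites \cite{CFL} for the implication that the growth condition \eqref{eq8} upgrades $W^{1,1}$-subsequential compactness to $H^1$-subsequential compactness. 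You instead reprove both by the explicit splitting argument on $\{H_n \le M\}$ versus $\{H_n > M\}$, using \eqref{eq8} together with $\int_0^\lambda W^\ast(H_n)\,dy \le C\varepsilon_n \to 0$ to kill $\int_{\{H_n>M\}} H_n^2\,dy$ and $|\{H_n>M\}|$, and boundedness plus $L^1$-convergence on the complement. This is exactly the argument behind the cited results, so the content is the same; you simply trade the citations for a self-contained computation. Your flagged concern about enforcing the mass constraint $\int_0^\lambda H_n\,dy = 1$ in the recovery-sequence construction so that $h_n(\lambda)=1$ holds exactly is the right one, and is standard to resolve by a small shift of the transition layers; note also that in the recovery direction the splitting argument is unnecessary since the transition profile can be taken with values in $[0,1]$, so $L^1$-convergence already gives $L^2$-convergence there.
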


\begin{proof}
Let $V_{\varepsilon }^{o}$ denote the functional \eqref{eq10} with $\mu =0.$ It can 
be shown that the $\Gamma$-convergence part of Proposition \ref{prop2} is also 
valid in the strong $L^{2}(0,\lambda )$ topology, cf. \cite{B}. By virtue of \eqref{eq6}, 
we then deduce that the family $\{V_{\varepsilon }^{o} \}_{\varepsilon >0}$ 
$\Gamma$-converges in the strong $H^{1}(0,\lambda )$ topology 
according to \eqref{eq11} with $\mu =0.$ Because the $\varepsilon$-independent 
functional $\int_0^\lambda {\frac{\mu {h}'}{2}(y-\lambda h)^{2}dy}$ is continuous 
on $H^{1}(0,\lambda ),$ we conclude that \eqref{eq11} holds $(\mu >0).$

For the compactness result, first note that if $\{V_{\varepsilon_{n} } [h_{n} ]\}$ is 
uniformly bounded, then so is $\{V_{\varepsilon_{n} }^{o} [h_{n} ]\}$ for 
$\{h_{n} \}\subset H^{2}(0,\lambda ).$ From the second part of Proposition \ref{prop2}  
while again employing the boundary conditions \eqref{eq6}, we deduce that $\{h_{n} 
\}\subset H^{2}(0,\lambda )$ has a convergent subsequence in 
$W^{1,1}(0,\lambda ).$ Finally, as shown in \cite{CFL}, the growth condition \eqref{eq8} 
then implies that $\{h_{n} \}$ has a convergent subsequence in 
$H^{1}(0,\lambda )$ as well. 
\end{proof}

\section{Energy-minimizing configurations}
We now explore minimizing configurations for the limiting functional \eqref{eq11}. 
If $\lambda =1,$ then $V$ is minimized by $h\equiv y$, which 
represents the unbroken specimen with corresponding energy $V=0$. If $\lambda >1,$ observe that the integral 
term in \eqref{eq11} does not vanish: ${h}'=0$ a.e.\ in $(0,\lambda )$ is 
incompatible with the boundary conditions, and $h=y/\lambda$ on any subset 
of non-zero measure violates ${h}'\in \{0,1\}$ a.e.\ in  $(0,\lambda ).$ 
Hence, there is a competition between the surface and foundation energies, and it follows that $h'$ must suffer at least one disconituity. 

To construct minimum-energy configurations for $\lambda >1,$ we start by 
defining two functions on a partial segment of length $\ell \leqslant 
\lambda$: 
\begin{equation}\label{eq12}
h_{(1)} =\begin{cases}
y &  0\leqslant y<\ell /\lambda, \\
 \ell /\lambda  & \ell /\lambda \leqslant y\leqslant \ell,
 \end{cases}
\quad
h_{(2)} =\begin{cases}
 0 & 0\leqslant y<\ell -\ell/\lambda, \\
 y-\ell (\lambda -1)/\lambda & \ell -\ell/\lambda \leqslant 
y\leqslant \ell.
\end{cases}
\end{equation}
Clearly, $h_{(\alpha )} \in PC^{1}[0,\ell ]$ with ${h}'_{(\alpha )} \in 
\{0,1\}, \alpha =1,2,$ as shown in Figure \ref{fig5}.

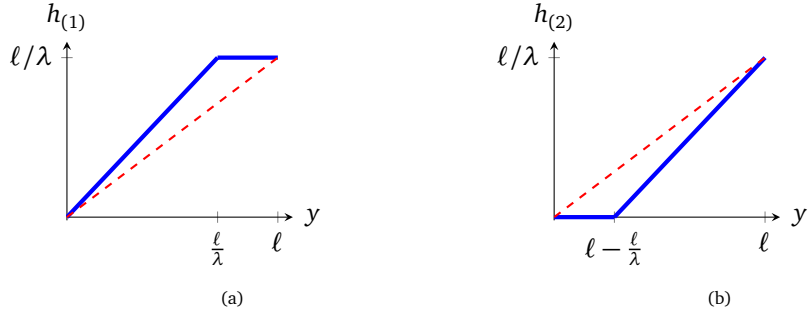
\begin{figure}[h]
\centering
  \begin{subfigure}[b]{0.45\textwidth}
  \begin{tikzpicture}[scale=1]
\begin{axis}[
    width=.75\textwidth,
    height=.20\textheight,
    axis x line=center,
    axis y line=center,
    xmin = 0, xmax = 1.5,
    ymin = -0.02, ymax = 1.1,
    xtick={0,1,1.4}, xticklabels={0,$\frac{\ell}\lambda$, $\ell$},
    ytick={0,1}, yticklabels={0,$\ell/\lambda$},
    domain = 0:1.5,
    samples = 200,
    xlabel = {$y$},
    ylabel = {$h_{(1)}$},
    every axis x label/.style={at={(ticklabel* cs:1.01)},anchor=west,},
    every axis y label/.style={at={(ticklabel* cs:1.01)},anchor=south,},      ]
    \addplot[domain = 0:1,ultra thick, blue] {x};
    \addplot[domain =1:1.4,ultra thick, blue] {1};
    \addplot [thick, red, dashed ] coordinates {(0,0) (1.4,1) };
\end{axis}
\end{tikzpicture}
\caption{}
\label{fig5a}
\end{subfigure}%
 \quad
 \begin{subfigure}[b]{0.45\textwidth}
\begin{tikzpicture}[scale=1]
\begin{axis}[
    width=.75\textwidth,
    height=.20\textheight,
    axis x line=center,
    axis y line=center,
    xmin = 0, xmax = 1.5,
    ymin = -0.02, ymax = 1.1,
    xtick={0,0.4,1.4}, xticklabels={0,$\ell -\frac{\ell}\lambda$, $\ell$},
    ytick={0,1}, yticklabels={0,$\ell/\lambda$},
    domain = 0:1.5,
    samples = 200,
    xlabel = {$y$},
    ylabel = {$h_{(2)}$},
    every axis x label/.style={at={(ticklabel* cs:1.01)},anchor=west,},
    every axis y label/.style={at={(ticklabel* cs:1.01)},anchor=south,},      ]
    \addplot[domain = 0.4:1.4, ultra thick, blue] {x-0.4};
    \addplot [ultra thick, blue] coordinates {(0,0) (0.4,0) };
    \addplot [ thick, red, dashed ] coordinates {(0,0) (1.4,1) };
 \end{axis}
\end{tikzpicture}
\caption{}
\label{fig5b}
\end{subfigure}
\caption{(a) The graph  of the inverse strain $h_{(1)}$. (b) The graph  of $h_{(2)}$.}
\label{fig5}
\end{figure}%

Moreover, each of these yield 
the same energy according to (\ref{eq11}) (defined on $[0,\ell ]),$ viz.,
\begin{equation}
\label{eq13}
V=C_{W^*} +\frac{\mu (\lambda -1)^{2}}{6\lambda^{3}}\ell^{3}.
\end{equation}
The idea is to use (\ref{eq12})$_{1,2}$ and (\ref{eq13}) to build minimum-energy 
configurations. For instance, if $\ell =\lambda,$ then obviously \eqref{eq13} yields 
$V=C_{W^*} +\mu (\lambda -1)^{2}/6.$ Next, consider two segments of length $\ell 
_{1}$ and $\ell_{2}$ with $\ell_{1} +\ell_{2} =\lambda.$ We use 
\eqref{eq12}$_{1}$ and \eqref{eq12}$_{2}$ consecutively (or vice-versa), inducing only two 
discontinuities in ${h}';$ the corresponding configurations are depicted in 
Figure \ref{fig6}.

\begin{figure}[h]
\centering
  \begin{subfigure}[b]{0.45\textwidth}
  \begin{tikzpicture}[scale=1]
\begin{axis}[
    width=1\textwidth,
    height=.20\textheight,
    axis x line=center,
    axis y line=center,
    xmin = 0, xmax = 3.5,
    ymin = -0.02, ymax = 2.55,
    xtick={0,1,1.4,1.97,3.4}, xticklabels={0,$\frac{\ell_1}\lambda$, $\ell_1$,$\ \ \lambda-\frac{\ell_2}\lambda$, $\lambda$},
    ytick={0,1,2.43}, yticklabels={0,$\ell_1/\lambda$,1},
    domain = 0:1.5,
    samples = 200,
    xlabel = {$y$},
    ylabel = {$h$},
    every axis x label/.style={at={(ticklabel* cs:1.01)},anchor=west,},
    every axis y label/.style={at={(ticklabel* cs:1.01)},anchor=south,},      ]
    \addplot[domain = 0:1,ultra thick, blue] {x};
    \addplot[domain =1:1.97,ultra thick, blue] {1};
    \addplot [thick, red, dashed ] coordinates {(0,0) (3.4,2.43) };
    \addplot [ultra thick, blue ] coordinates {(1.97,1) (3.4,2.43) };
\end{axis}
\end{tikzpicture}
\caption{}
\label{fig6a}
\end{subfigure}%
 \quad
 \begin{subfigure}[b]{0.45\textwidth}
\begin{tikzpicture}[scale=1]
\begin{axis}[
    width=1\textwidth,
    height=.20\textheight,
    axis x line=center,
    axis y line=center,
    xmin = 0, xmax = 3.5,
    ymin = -0.02, ymax = 2.55,
    xtick={0,0.57,2,3,3.4}, xticklabels={0,$\ell_2-\frac{\ell_2}\lambda$, $\ell_2$,$\ell_2-\frac{\ell_1}\lambda\ \ $, $\ \lambda$},
    ytick={0,1.43,2.43}, yticklabels={0,$\ell_2/\lambda$,1},
    domain = 0:1.5,
    samples = 200,
    xlabel = {$y$},
    ylabel = {$h$},
    every axis x label/.style={at={(ticklabel* cs:1.01)},anchor=west,},
    every axis y label/.style={at={(ticklabel* cs:1.01)},anchor=south,},      ]
    \addplot[domain = 0.57:2, ultra thick, blue] {x-0.57};
    \addplot [ultra thick, blue] coordinates {(0,0) (0.57,0) };
    \addplot [ultra thick, blue] coordinates {(2,1.43) (3,2.43)};
    \addplot [ultra thick, blue] coordinates {(3,2.43) (3.4,2.43)};
    \addplot [thick, red, dashed ] coordinates {(0,0) (3.4,2.43)};
 \end{axis}
\end{tikzpicture}
\caption{}
\label{fig6b}
\end{subfigure}
\caption{(a) The graph of $h_{(1)}$ followed by $h_{(2)}$. (b) The graph of $h_{(2)}$ followed by $h_{(1)}$.}
\label{fig6}
\end{figure}
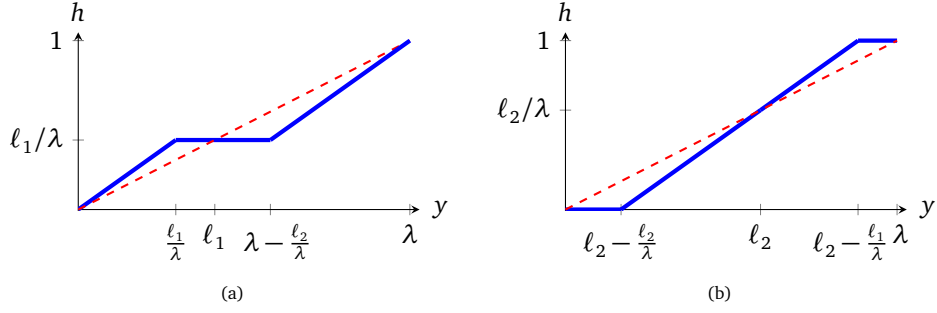%

In either case, from (\ref{eq11})-(\ref{eq13}), we find
\begin{equation}
\label{eq14}
V=2C_{W^*} +\frac{\mu (\lambda -1)^{2}}{6\lambda^{3}}[\ell_{1}^{3} +(\lambda 
-\ell_{1} )^{3}].
\end{equation}
The first-derivative test with respect to $\ell_{1} $ reveals $\ell 
_{1}^{2} -(\lambda -\ell_{1} )^2=0$, that is $\ell_{1} =\lambda /2=\ell 
_{2} .$ It is easy to see that this delivers the minimum value of (\ref{eq14}), 
given by
\begin{equation}
\label{eq15}
V=2C_{W^*} +\frac{\mu (\lambda -1)^{2}}{6\,2^2}.
\end{equation}
Something similar holds in the general case. First, assume a partition of 
$n$ segments of length $\ell_{j} , j=1,2,...,n,$ with 
$\sum\limits_{j=1}^n {\ell_{j} } =\lambda .$ We follow the same pattern as 
in Figure \ref{fig6}, viz., with alternating segments of $h_{(1)}$ and $h_{(2)}$ as 
specified in (\ref{eq12}). Then
\begin{equation}
\label{eq16}
V=nC_{W^*} +\frac{\mu (\lambda -1)^{2}}{6\lambda^{3}}[\sum\limits_{j=1}^{n-1} 
{\ell_{j}^{3} } +(\lambda -\sum\limits_{j=1}^{n-1} {\ell_{j} } )^{3}].
\end{equation}
The first-derivative test now implies $\ell_{j}^{2} =(\lambda 
-\sum\limits_{j=1}^{n-1} {\ell_{j} } )^{2}$ for  $j=1,2,...,n-1$,
%Taking the square root and summing all the $n-1$ equations
%we deduce that
%$$
%\sum_{j=1}^{n-1} \ell_{j} =\frac{n-1}n \lambda,
%$$
which leads to
$
\ell_{1} =\ell_{2} =...=\ell_{n} =\frac\lambda n.
$
The corresponding minimum energy is given by
\begin{equation}
\label{eq17}
V=V_{n} :=nC_{W^*} +\frac{\mu (\lambda -1)^{2}}{6 n^{2}}.
\end{equation}

For any real number $x\ge 1$, we define 
an integer $\left[\kern-0.15em\left[ x 
\right]\kern-0.15em\right]$ as follows: let $m\leqslant x\leqslant m+1,$ 
i.e., $m$ denotes the integer part of $x$, then $\left[\kern-0.15em\left[ x 
\right]\kern-0.15em\right]=m$ if $V_{m} \leqslant V_{m+1}$ and 
$\left[\kern-0.15em\left[ x \right]\kern-0.15em\right]=m+1$ if $V_{m+1} 
\leqslant V_{m}.$ If $0 < x < 1$, we set $\left[\kern-0.15em\left[ x 
\right]\kern-0.15em\right]= 1$.

\begin{theorem}
For fixed $C_{W^*} ,\mu >0$ and $\lambda >1,$ the energy 
minimizing configuration for \eqref{eq11} is given by $n$ consecutive, alternating 
versions of \eqref{eq12}$_{1,2}$, with each segment of length $\ell =\lambda 
/n,$ where
\begin{equation}\label{nlambda}
n=\left[\kern-0.25em\left[ {\left( {\frac{\mu (\lambda -1)^{2}}{3C_{W^*} }} 
\right)^{1/3}} \right]\kern-0.25em\right].
\end{equation}
Moreover, there are two such equivalent configurations - one ``starting'' 
with \eqref{eq12}$_{1}$ and the other with \eqref{eq12}$_{2}$ as in Figure \ref{fig6}. 
\end{theorem}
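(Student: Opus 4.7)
The plan is a three-step minimization: first reduce to a standard function class, then minimize at fixed kink number $n$, then optimize the discrete parameter $n$. Step 1 is immediate from Proposition \ref{prop3}: admissible $h$ with $V[h]<\infty$ are piecewise linear with $h'\in\{0,1\}$ a.e., and the observation just after \eqref{eq11} forces $n:=\#D(h')\geq 1$ when $\lambda>1$.

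For Step 2, the key simplification is to introduce $g(y):=y-\lambda h(y)$, which satisfies $g(0)=g(\lambda)=0$ and $g'=1-\lambda h'\in\{1,\,1-\lambda\}$ a.e. Using $h'=(1-g')/\lambda$ together with $\int_0^\lambda g'g^2\,dy=[g^3/3]_0^\lambda=0$, the foundation energy collapses to
$$
\frac{\mu}{2}\int_0^\lambda h'(y-\lambda h)^2\,dy \;=\; \frac{\mu}{2\lambda}\int_0^\lambda g(y)^2\,dy.
$$
Thus, for fixed $n$, I would minimize $\|g\|_{L^2(0,\lambda)}^2$ over piecewise-linear $g$ with slopes in $\{1,1-\lambda\}$, vanishing boundary values, and exactly $n$ kinks. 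Parameterizing by the lengths $s_1,\ldots,s_p$ of the slope-$(1-\lambda)$ pieces (with $\sum s_i=1$) and $t_1,\ldots,t_q$ of the slope-$1$ pieces (with $\sum t_j=\lambda-1$), $p+q=n+1$, and using the identity $\int_a^b g^2=\ell(a^2+ab+b^2)/3$ on each linear piece, I would set up Lagrange multipliers. The resulting system has a unique interior critical point: the symmetric zigzag whose $n$ local extrema of $g$ all have absolute value $(\lambda-1)/n$, which coincides exactly with the equal-segments configuration of \eqref{eq12} with $\ell_j=\lambda/n$. A direct computation then yields $\|g\|_{L^2}^2=(\lambda-1)^2\lambda/(3n^2)$, hence $V_n=nC_{W^*}+\mu(\lambda-1)^2/(6n^2)$; the two equivalent minimizers correspond to the binary choice of starting type ($h_{(1)}$ or $h_{(2)}$).

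For Step 3, I would extend to the smooth function $\phi(x):=xC_{W^*}+\mu(\lambda-1)^2/(6x^2)$ on $x>0$. Since $\phi''(x)=\mu(\lambda-1)^2/x^4>0$, $\phi$ is strictly convex, with unique critical point $x_*=\bigl(\mu(\lambda-1)^2/(3C_{W^*})\bigr)^{1/3}$, matching \eqref{nlambda}. Strict convexity forces the integer minimizer of $V_n$ to lie in $\{\lfloor x_*\rfloor,\lceil x_*\rceil\}$ whenever $x_*\geq 1$, and to equal $1$ when $x_*<1$; the tie-break between $V_m$ and $V_{m+1}$ is precisely the criterion built into the definition of $\left[\kern-0.25em\left[ x_* \right]\kern-0.25em\right]$.

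The main obstacle is Step 2: the Lagrange-multiplier analysis for general $n$ is algebraically involved (the parity of $n$ affects whether $p=q$ or $|p-q|=1$), and one must verify that the interior critical point is a global minimum rather than a saddle. One must also check that configurations on the boundary of the parameter space---where some $s_i$ or $t_j$ shrinks to zero---have strictly fewer kinks and are therefore already accounted for by the outer minimization in Step 3, so they never spoil the bound. The $g$-reformulation substantially shortens the algebra but does not eliminate this bookkeeping.
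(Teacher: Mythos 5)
Your $g$-reformulation is a genuinely different route from the paper, and a cleaner one. The paper never sets up a general minimization at all: it \emph{postulates} the alternating $h_{(1)}/h_{(2)}$ structure built from segments on which $h$ coincides with $y/\lambda$ at the endpoints, minimizes over the segment lengths $\ell_1,\dots,\ell_n$ within that restricted class, and then states the Theorem without verifying that a competitor outside the class cannot do better. Your identity
$\frac{\mu}{2}\int_0^\lambda h'(y-\lambda h)^2\,dy=\frac{\mu}{2\lambda}\|g\|_{L^2}^2$
(using $\int g'g^2=0$) isolates the real variational problem and exposes exactly what must be proved.

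That said, the gap you flag in Step 2 is real, and your proposed parametrization by piece lengths $(s_i,t_j)$ makes it harder than it needs to be, because the endpoint values of $g$ on each piece are nonlinear functions of all previous lengths. Parametrize instead by the kink values $v_1,\dots,v_n$ of $g$ (with $v_0=v_{n+1}=0$ and alternating slopes $m_i\in\{1,1-\lambda\}$). Telescoping the per-piece identities
$\int_{\mathrm{piece}\ i}g^2=\frac{1}{3m_i}(v_i^3-v_{i-1}^3)$ and $\tau_i=\frac{1}{m_i}(v_i-v_{i-1})$
gives, with $w_i:=(-1)^i v_i$ (or $(-1)^{i-1}v_i$, depending on the starting slope),
\begin{equation*}
\|g\|_{L^2}^2=\frac{\lambda}{3(\lambda-1)}\sum_{i=1}^n w_i^3,\qquad \sum_{i=1}^n w_i=\lambda-1,
\end{equation*}
with the positivity of the piece lengths becoming $w_1>0$, $w_n>0$, $w_i+w_{i+1}>0$. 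Now the parity bookkeeping disappears. If some $w_j<0$, the adjacent constraints force $w_{j\pm1}>|w_j|$, and the first-order perturbation $w_j\mapsto w_j+2\delta$, $w_{j\pm1}\mapsto w_{j\pm1}-\delta$ strictly decreases $\sum w_i^3$; so a minimizer has all $w_i\geq 0$. On $\{w_i\geq 0\}$ the objective is convex and the constraint set is convex, so the minimizer is $w_i\equiv(\lambda-1)/n$, which is precisely the symmetric zigzag (equivalently, the paper's class corresponds exactly to $w_i>0$ for all $i$, so the minimizer does lie there). Boundary cases where some $\tau_i\to 0$ merge kinks and reduce to smaller $n$, which your Step 3 already handles. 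Step 3 itself is correct as written: $\phi$ is strictly convex, the optimal integer lies in $\{\lfloor x_*\rfloor,\lceil x_*\rceil\}$ (or equals 1 if $x_*<1$), and the tiebreak matches the paper's definition of $[[\cdot]]$.

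In short: your approach is right and strictly more complete in spirit than the paper's exposition, but Step 2 needs the change of variables $w_i=(-1)^iv_i$ and the sign argument above to actually close; with them the proof is clean and general.
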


As an example, consider the inverse stored energy $W^{\ast }(H)=H(1-H)^{2}$
with $\mu=200$. We then find
$$
C_{W^{\ast }}=\sqrt{2}\int_0^1\sqrt{\tau(1-\tau)^2}\, d\tau=2\sqrt{2}\int_0^1 (1-s^2)s^2\, ds=\frac{4}{15}\sqrt{2},
$$
and by means of \eqref{nlambda}, for any fixed $\lambda$, we can determine the number of cracks $n$.
For instance, for $\lambda=1.5$ we have that
$$
x={\left( {\frac{\mu (\lambda -1)^{2}}{3C_{W^*} }} 
\right)^{1/3}}=3.5355.
$$
Setting $m=3$, i.e., the largest integer smaller than $x$, we deduce from \eqref{eq17} that
$$
V_3=2.0753 \quad \mbox{and}\quad  V_4=2.0293.
$$
Hence, $n=\left[\kern-0.15em\left[ x \right]\kern-0.15em\right]=4$.  The minimizing configurations are depicted in Figures \ref{fig7a},\ref{fig7b}. In each case, we infer an associated deformation map $f$ by plotting the inverse of $h$ on each of the connected sets contained in 
$\{y\in[0,\lambda]:h'(h)>0\}$, as shown in Figures \ref{fig7c},\ref{fig7d}, respectively.
\begin{figure}[h]
\centering
  \begin{subfigure}[b]{0.45\textwidth}
  \begin{tikzpicture}[scale=1]
\begin{axis}[
    width=1\textwidth,
    height=.20\textheight,
    axis x line=center,
    axis y line=center,
    xmin = 0, xmax = 3.8,
    ymin = -0.02, ymax = 3.2,
    xtick={0,3.6}, xticklabels={0,$\lambda$},
    ytick={0,3}, yticklabels={0,1},
    domain = 0:1.5,
    samples = 200,
    xlabel = {$y$},
    ylabel = {$h$},
    every axis x label/.style={at={(ticklabel* cs:1.01)},anchor=west,},
    every axis y label/.style={at={(ticklabel* cs:1.01)},anchor=south,},      ]
    \addplot [thick, red, dashed ] coordinates {(0,0) (3.6,3) };
    \addplot [ultra thick, blue ] coordinates {(0,0) (0.75,0.75) (1.05,0.75) (2.55,2.25) (2.85,2.25) (3.6,3)  };
\end{axis}
\end{tikzpicture}
\caption{}
\label{fig7a}
\end{subfigure}%
 \quad
 \begin{subfigure}[b]{0.45\textwidth}
  \begin{tikzpicture}[scale=1]
\begin{axis}[
    width=1\textwidth,
    height=.20\textheight,
    axis x line=center,
    axis y line=center,
    xmin = 0, xmax = 3.8,
    ymin = -0.02, ymax = 3.2,
    xtick={0,3.6}, xticklabels={0,$\lambda$},
    ytick={0,3}, yticklabels={0,1},
    domain = 0:1.5,
    samples = 200,
    xlabel = {$y$},
    ylabel = {$h$},
    every axis x label/.style={at={(ticklabel* cs:1.01)},anchor=west,},
    every axis y label/.style={at={(ticklabel* cs:1.01)},anchor=south,},      ]
    \addplot [thick, red, dashed ] coordinates {(0,0) (3.6,3) };
    \addplot [ultra thick, blue ] coordinates {(0,0) (0.15,0) (1.65,1.5) (1.95,1.5) (3.45,3) (3.6,3)  };
\end{axis}
\end{tikzpicture}
\caption{}
\label{fig7b}
\end{subfigure}
\newline  %%%%%%%%%%%%%%%%%%%%%%%
  \begin{subfigure}[b]{0.45\textwidth}
  \begin{tikzpicture}[scale=1]
\begin{axis}[
    width=1\textwidth,
    height=0.85\textwidth,
    axis x line=center,
    axis y line=center,
    xmin = -0.1, xmax = 3.8,
    ymin = -0.1, ymax = 3.8,
    xtick={0,3}, xticklabels={0,1},
    ytick={0,3.6}, yticklabels={0,$\lambda$},
    domain = 0:1.5,
    samples = 200,
    xlabel = {$x$},
    ylabel = {$f$},
    every axis x label/.style={at={(ticklabel* cs:1.01)},anchor=west,},
    every axis y label/.style={at={(ticklabel* cs:1.01)},anchor=south,},      ]
    %\addplot [thick, red, dashed ] coordinates {(0,0) (0,0) };
        \draw[red, thick] (axis cs: 0.75, 0.75) circle [radius=4];
   \draw[red, ultra thick, fill] (axis cs: 0, 0) circle [radius=3];
   \addplot [ultra thick, red ] coordinates {(0,0) (0.73,0.73)};
   %\addplot [thick, red, dashed ] coordinates {(0.75,0.77) (0.75,1.03) };% (0.75) (2.55,2.25) (2.85,2.25) (3.6,3)  };
    \draw[red, ultra thick, fill] (axis cs: 3,3.6) circle [radius=4];
    \addplot [ultra thick, red ] coordinates {(2.27,2.87) (3,3.6)};
   \draw[red, thick] (axis cs: 2.25, 2.85) circle [radius=4];
   \draw[red, thick] (axis cs: 0.75, 1.05) circle [radius=4];
   \draw[red, thick] (axis cs: 2.25, 2.55) circle [radius=4];
   \addplot [ultra thick, red ] coordinates { (0.77,1.07) (2.23,2.53)};
    %\addplot [ultra thick, red ] coordinates {(0,0) (3,3.6)};
\end{axis}
\end{tikzpicture}
\caption{}
\label{fig7c}
\end{subfigure}%
% \begin{subfigure}[b]{0.45\textwidth}
%  \begin{tikzpicture}[scale=1]
%\begin{axis}[
%    width=1\textwidth,
%    height=.15\textheight,
%    axis x line=center,
%    axis y line=center,
%    xmin = 0, xmax = 3.8,
%    ymin = -0.02, ymax = 1.2,
%    xtick={0,3.6}, xticklabels={0,$\lambda$},
%    ytick={0,1}, yticklabels={0,1},
%    domain = 0:1.5,
%    samples = 200,
%    xlabel = {$y$},
%    ylabel = {$h'$},
%    every axis x label/.style={at={(ticklabel* cs:1.01)},anchor=west,},
%    every axis y label/.style={at={(ticklabel* cs:1.01)},anchor=south,},      ]
%    %
%    \addplot [ultra thick, blue ] coordinates {(0,1) (0.75,1)};
%    \addplot [ultra thick, blue ] coordinates {(0.75,0) (1.05,0)};
%    \addplot [ultra thick, blue ] coordinates {(1.05,1) (2.65,1)};
%    \addplot [ultra thick, blue ] coordinates {(2.65,0) (2.95,0)};
%    \addplot [ultra thick, blue ] coordinates {(2.95,1) (3.6,1)};
%    %
%    \addplot [thick, blue, dashed ] coordinates {(0.75,0) (0.75,1) };
%    \addplot [thick, blue, dashed ] coordinates {(1.05,0) (1.05,1) };
%    \addplot [thick, blue, dashed ] coordinates {(2.65,0) (2.65,1) };
%    \addplot [thick, blue, dashed ] coordinates {(2.95,0) (2.95,1) };
%    
%\end{axis}
%\end{tikzpicture}
%\caption{}
%\end{subfigure}
\quad
  \begin{subfigure}[b]{0.45\textwidth}
  \begin{tikzpicture}[scale=1]
\begin{axis}[
    width=1\textwidth,
    height=0.85\textwidth,
    axis x line=center,
    axis y line=center,
    xmin = -0.1, xmax = 3.8,
    ymin = -0.1, ymax = 3.8,
    xtick={0,3}, xticklabels={0,1},
    ytick={0,3.6}, yticklabels={0,$\lambda$},
    domain = 0:1.5,
    samples = 200,
    xlabel = {$x$},
    ylabel = {$f$},
    every axis x label/.style={at={(ticklabel* cs:1.01)},anchor=west,},
    every axis y label/.style={at={(ticklabel* cs:1.01)},anchor=south,},      ]
    %\addplot [thick, red, dashed ] coordinates {(0,0) (0,0) };
        \draw[red, thick] (axis cs: 0, 0.2) circle [radius=4];
   \draw[red, ultra thick, fill] (axis cs: 0, 0) circle [radius=3];
   \addplot [ultra thick, red ] coordinates {(0.02,0.22) (1.48,1.68)};
    \draw[red, ultra thick, fill] (axis cs: 3,3.6) circle [radius=4];
    \addplot [ultra thick, red ] coordinates { (1.52,1.92) (2.98,3.38)};
   \draw[red, thick] (axis cs: 3, 3.4) circle [radius=4];
   \draw[red, thick] (axis cs: 1.5, 1.9) circle [radius=4];
   \draw[red, thick] (axis cs: 1.5, 1.7) circle [radius=4];
   %\addplot [ultra thick, red ] coordinates { (1.52,1.07) (2.23,2.53)};
    %\addplot [ultra thick, red ] coordinates {(0,0) (3,3.6)};
\end{axis}
\end{tikzpicture}
\caption{}
\label{fig7d}%
\end{subfigure}%
% \begin{subfigure}[b]{0.45\textwidth}
%  \begin{tikzpicture}[scale=1]
%\begin{axis}[
%    width=1\textwidth,
%    height=.15\textheight,
%    axis x line=center,
%    axis y line=center,
%    xmin = 0, xmax = 3.8,
%    ymin = -0.02, ymax = 1.2,
%    xtick={0,3.6}, xticklabels={0,$\lambda$},
%    ytick={0,1}, yticklabels={0,1},
%    domain = 0:1.5,
%    samples = 200,
%    xlabel = {$y$},
%    ylabel = {$h'$},
%    every axis x label/.style={at={(ticklabel* cs:1.01)},anchor=west,},
%    every axis y label/.style={at={(ticklabel* cs:1.01)},anchor=south,},      ]
%    %
%    \addplot [ultra thick, blue ] coordinates {(0,0) (0.15,0)};
%    \addplot [ultra thick, blue ] coordinates {(0.15,1) (1.65,1)};
%    \addplot [ultra thick, blue ] coordinates {(1.65,0) (1.95,0)};
%    \addplot [ultra thick, blue ] coordinates {(1.95,1) (3.45,1)};
%    \addplot [ultra thick, blue ] coordinates {(3.45,0) (3.6,0)};
%    %
%    \addplot [thick, blue, dashed ] coordinates {(0.15,0) (0.15,1) };
%    \addplot [thick, blue, dashed ] coordinates {(1.65,0) (1.65,1) };
%    \addplot [thick, blue, dashed ] coordinates {(1.95,0) (1.95,1) };
%    \addplot [thick, blue, dashed ] coordinates {(3.45,0) (3.45,1) };

%\end{axis}
%\end{tikzpicture}
%\caption{}
%\end{subfigure}
\caption{(a) The graph of the minimizing configuration (inverse deformation) starting with $h_{(1)}$. (b) The graph of the minimizing configuration starting with $h_{(2)}$. (c) The deformation whose  inverse is depicted in (a).  (d) The deformation whose inverse is depicted in (b). Note the discontinuities corresponding to cracks.}
\label{fig7}
\end{figure}
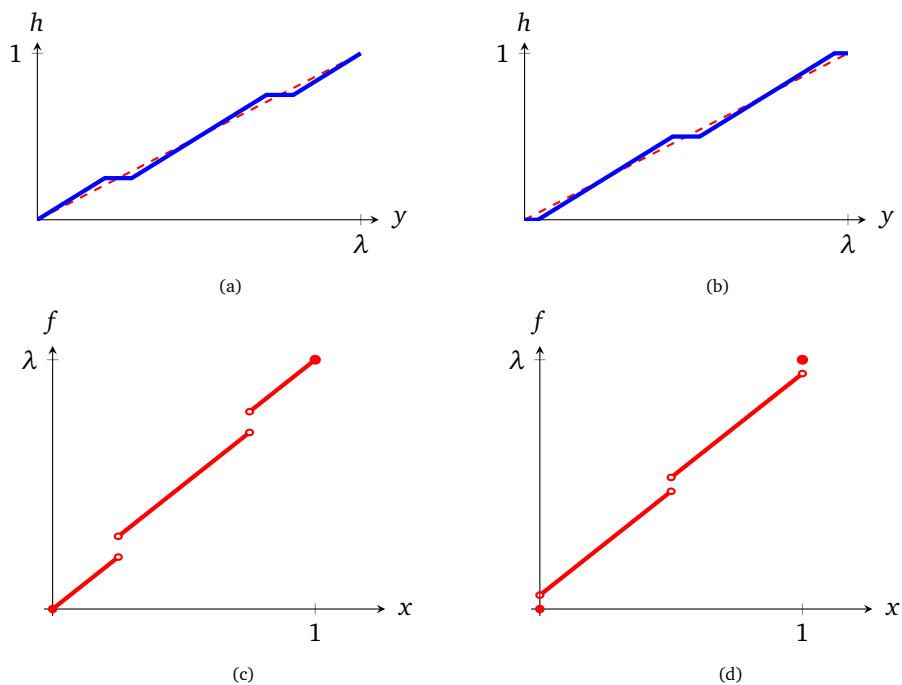%

The number of cracks clearly increases as $\lambda$ increases. For example, in Figure \ref{fig8} we give the number of cracks corresponding to the 
values of $\lambda\in (1,2)$, where we have used the same specific model as above.
\begin{figure}[h]
    \centering
    \includegraphics[scale=0.9]{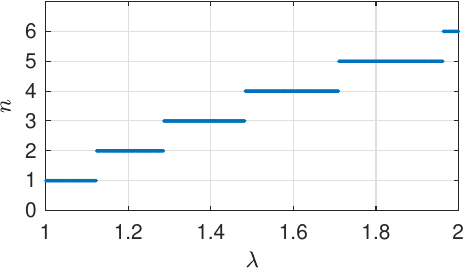}
    \caption{Number of cracks as a function of $\lambda$, when $W^{\ast }(H)=H(1-H)^{2}$
and $\mu=200$.}
\label{fig8}
\end{figure}

\section{Concluding remarks}

The properties of the minimizing configurations for the sharp-interface energies $I$ (Proposition \ref{prop2}) and $V$ (Proposition \ref{prop3}) are
in consonance with the results of \cite{RHA} and \cite{GH}, based on small  $\varepsilon>0$.  As in the former work, we find here that failure 
entails a single crack at one of the two ends, with the total energy minimized at those configurations.  A direct comparison of our results with 
those of \cite{GH} is not so straightforward.  The detailed results from the latter are numerical, with the cracked configurations only shown to be 
local energy minimizers.  On the other hand, both studies yield configurations containing multiple cracks that appear in a regularly spaced pattern, and whose number increases with load, cf. Figure \ref{fig8}. More importantly, this qualitatively agrees with observations from experiments recently reported in \cite{M2}. Here we are able to make a quantitative  prediction of the number of cracks by the explicit formula (\ref{nlambda}) for the first time, while we are assured that the configurations involved provide global energy minima. 

The formation of multiple cracks in a model combining an elastic foundation with a damage field is considered in \cite{ST}. Periodic arrangements of patterns involving multiple localized large-strain zones, interpreted as cracks, are found. The results are obtained via numerical bifurcation methods, with local stability deduced via the second variation of the discretized energy. As such, they are comparable to results presented in \cite{GH}, although truly fractured solutions are obtained in the latter. Moreover, the computed bifurcating solution branches found in \cite{ST} eventually ``return'' to the homogeneous solution path. This behavior, indicative of crack healing, is not encountered in \cite{GH} or the present work. See \cite{GH} for a discussion and references.  

Our results here imply fracture for any load  $\lambda>1$. This behavior is suggested by the results in  \cite{RHA}:  It is shown that the bar breaks along a global branch of unstable solutions at a finite load $\lambda=1+\delta$, $\delta>0$.   Referring to Figures 2 and 8 of that work, one infers that $\delta$  is an increasing function of $\varepsilon$  with $\delta=O(\varepsilon)$  as $\varepsilon \searrow 0$.  The dependence on small  $\varepsilon$ in \cite{GH} is not explored.  

A thin layer of surface effects can be observed on crack faces in both \cite{RHA} and \cite{GH}.  Indeed, it is shown that the surface energy is  $O(\varepsilon)$  as $\varepsilon \searrow 0$  in \cite{RHA}. The coefficient of $\varepsilon$ in that asymptotic result coincides with  $C_{W^{\ast }}$ defined in Proposition \ref{prop2}. Here we find that surface energy is concentrated at the crack faces, which is precisely in keeping with the Griffith picture. At the same time, the inverse-deformation approach only makes sense for finite deformations.  This contrasts with classical fracture mechanics, based on linear (infinitesimal) elasticity.

\section*{Acknowledgements}
We dedicate this work to Nick Triantafyllidis, scholar and dear friend, on the occasion of his 
$70^{\rm{th}}$ birthday.  The work of TJH was supported in part by the National Science Foundation through grant DMS-2006586, which is gratefully acknowledged. RP acknowledges the Italian National Group of Mathematical Physics of INdAM. We also thank Gokul Nair and Arnav Gupta for useful discussions.

%% References
%%
%% Following citation commands can be used in the body text:
%% Usage of \cite is as follows:
%%   \cite{key}          ==>>  [#]
%%   \cite[chap. 2]{key} ==>>  [#, chap. 2]
%%   \citet{key}         ==>>  Author [#]

%% References with bibTeX database:

\bibliographystyle{model1-num-names}
\bibliography{references.bib}

%% Authors are advised to submit their bibtex database files. They are
%% requested to list a bibtex style file in the manuscript if they do
%% not want to use model1-num-names.bst.

\end{document}